\documentclass[12pt]{amsart}
\usepackage{amscd,amsmath,amsthm,amssymb}
\usepackage{pstcol,pst-plot,pst-3d}
\usepackage{lmodern,pst-node}
\usepackage{lineno}
%
%
%
\def\NZQ{\Bbb}               

\def\ZZ{{\NZQ Z}}

\def\FF{{\NZQ F}}
\def\GG{{\NZQ G}}

%
%
\def\frk{\frak}               

\def\mm{{\frk m}}

\def\nn{{\frk n}}

%

%

%
\def\opn#1#2{\def#1{\operatorname{#2}}} 
%
\opn\chara{char} \opn\length{\ell} \opn\pd{pd} \opn\rk{rk}
\opn\projdim{proj\,dim} \opn\injdim{inj\,dim} \opn\rank{rank}
\opn\depth{depth} \opn\grade{grade} \opn\height{height}
\opn\embdim{emb\,dim} \opn\codim{codim}

\opn\Tr{Tr} \opn\bigrank{big\,rank}
\opn\superheight{superheight}\opn\lcm{lcm}
\opn\trdeg{tr\,deg}
\opn\reg{reg} \opn\lreg{lreg} \opn\ini{in} \opn\lpd{lpd}
\opn\size{size}\opn\bigsize{bigsize}
\opn\cosize{cosize}\opn\bigcosize{bigcosize}
\opn\sdepth{sdepth}\opn\sreg{sreg}
\opn\link{link}\opn\fdepth{fdepth}
\opn\Deg{Deg}
%
\opn\div{div} \opn\Div{Div} \opn\cl{cl} \opn\Cl{Cl}
%
%
\opn\Spec{Spec} \opn\Supp{Supp} \opn\supp{supp} \opn\Sing{Sing}
\opn\Ass{Ass} \opn\Min{Min}\opn\Mon{Mon} \opn\dstab{dstab} \opn\astab{astab}
%
%
\opn\Ann{Ann} \opn\Rad{Rad} \opn\Soc{Soc}
%
%
\opn\Im{Im} \opn\Ker{Ker} \opn\Coker{Coker} \opn\Am{Am}
\opn\Hom{Hom} \opn\Tor{Tor} \opn\Ext{Ext} \opn\End{End}
\opn\Aut{Aut} \opn\id{id}

\opn\nat{nat}
\opn\pff{pf}
\opn\Pf{Pf} \opn\GL{GL} \opn\SL{SL} \opn\mod{mod} \opn\ord{ord}
\opn\Gin{Gin} \opn\Hilb{Hilb}\opn\sort{sort}
%
%
\opn\aff{aff} \opn\con{conv} \opn\relint{relint} \opn\st{st}
\opn\lk{lk} \opn\cn{cn} \opn\core{core} \opn\vol{vol}
\opn\link{link} \opn\star{star}\opn\lex{lex} \opn\sat{sat}
\opn\gr{gr}

%
%

\def\pot#1#2{#1[\kern-0.28ex[#2]\kern-0.28ex]}

%
%
\opn\dirlim{\underrightarrow{\lim}}
\opn\inivlim{\underleftarrow{\lim}}
%
%
%

\let\sect=\cap
\let\dirsum=\oplus

\let\iso=\cong
\let\Union=\bigcup

\let\Dirsum=\bigoplus

%
%
\let\to=\rightarrow
\let\To=\longrightarrow
{}
\def\Implies{\ifmmode\Longrightarrow \else
        \unskip${}\Longrightarrow{}$\ignorespaces\fi}
\def\implies{\ifmmode\Rightarrow \else
        \unskip${}\Rightarrow{}$\ignorespaces\fi}
\def\iff{\ifmmode\Longleftrightarrow \else
        \unskip${}\Longleftrightarrow{}$\ignorespaces\fi}

\let\:=\colon
\newtheorem{Theorem}{Theorem}[section]
\newtheorem{Lemma}[Theorem]{Lemma}
\newtheorem{Corollary}[Theorem]{Corollary}
\newtheorem{Proposition}[Theorem]{Proposition}
\newtheorem{Remark}[Theorem]{Remark}

%
%
\let\epsilon\varepsilon
\let\kappa=\varkappa
%
%
\textwidth=15cm \textheight=22cm \topmargin=0.5cm
\oddsidemargin=0.5cm \evensidemargin=0.5cm \pagestyle{plain}
%
%
\def\qed{\ifhmode\textqed\fi
      \ifmmode\ifinner\quad\qedsymbol\else\dispqed\fi\fi}
\def\textqed{\unskip\nobreak\penalty50
       \hskip2em\hbox{}\nobreak\hfil\qedsymbol
       \parfillskip=0pt \finalhyphendemerits=0}
\def\dispqed{\rlap{\qquad\qedsymbol}}

%
\opn\dis{dis}
\def\pnt{{\raise0.5mm\hbox{\large\bf.}}}

\opn\Lex{Lex}



\begin{document}

\title {Koszul binomial edge ideals}
\author {Viviana Ene, J\"urgen Herzog and Takayuki Hibi}

\address{Faculty of Mathematics and Computer Science, Ovidius University, Bd.\ Mamaia 124,
 900527 Constanta,  and
 \newline
 \indent Simion Stoilow Institute of Mathematics of Romanian Academy, Research group of the project  ID-PCE-2011-1023,
 P.O.Box 1-764, Bucharest 014700, Romania} \email{vivian@univ-ovidius.ro}

\address{J\"urgen Herzog, Fachbereich Mathematik, Universit\"at Duisburg-Essen, Campus Essen, 45117
Essen, Germany} \email{juergen.herzog@uni-essen.de}

\address{Takayuki Hibi, Department of Pure and Applied Mathematics, Graduate School of Information Science and Technology,
Osaka University, Toyonaka, Osaka 560-0043, Japan}
\email{hibi@math.sci.osaka-u.ac.jp}

\thanks{The  first author was supported by the grant UEFISCDI,  PN-II-ID-PCE- 2011-3-1023.}

\begin{abstract}
It is shown that if  the binomial edge ideal of a graph $G$ defines a  Koszul algebra, then $G$ must be chordal and claw free. A converse of this statement is proved for a class  of chordal and claw free graphs.
\end{abstract}

\subjclass[2010]{13C13, 13A30, 13F99,  05E40}
\keywords{Koszul algebra, binomial edge ideals }

\maketitle

\section*{Introduction}
A Koszul algebra in our context will be a standard graded (commutative) $K$-algebra whose graded maximal ideal has a linear resolution. This class of $K$-algebras occurs quite frequently  among  toric rings and other $K$-algebras arising in combinatorial commutative algebra and algebraic geometry.  It is known and easily seen that a Koszul algebra is defined by quadrics. This statement has a partial converse, which says that a $K$-algebra is Koszul if its defining ideal admits a reduced  Gr\"obner basis of quadrics. The proof of these statements can for example be found in \cite{EH}.

In the present paper we consider $K$-algebras defined by binomial edge ideals. Given a finite simple graph $G$ on the vertex set $[n]=\{1,2\ldots,n\}$, one defines the binomial edge ideal $J_G$ associated with $G$ as the ideal generated by the quadrics $f_{ij}= x_iy_j-x_jy_i$ in $S=K[x_1,\ldots,x_n, y_1,\ldots,y_n]$  with $\{i,j\}$ an edge of $G$.

This class of ideals was  introduced in \cite{HHHKR} and \cite{Oht}. Part of  the motivation to consider such ideals arises from algebraic statistic as explicated in \cite{HHHKR}, see also \cite{EH}. In recent years several papers appeared (\cite{CR}, \cite{EHH}, \cite{EZ}, \cite{MM}, \cite{MK1}, \cite{MK2}) attempting to describe algebraic and homological properties of binomial edge ideals in terms of the underlying graph. Since by its  definition $J_G$ is generated by quadrics it is natural to ask  for which graphs $G$ the $K$-algebra $S/J_G$ is Koszul. If this happens to be the case we call $G$ Koszul with respect to $K$. As noted above, $G$ will be Koszul if $J_G$ has a quadratic Gr\"obner basis. This is the case with respect to the  lexicographic order induced by $x_1>\cdots >x_n>y_1>\cdots >y_n$ if and only if $G$ is a closed graph  with respect to the given
labeling, in other words,  if $G$  satisfies the following condition: whenever $\{i, j\}$and $\{i, k\}$
are edges of $G$  and either $i < j$,  $i < k$  or $i > j$,  $i > k$ then $\{j,k\}$  is also an edge of $G$.
One calls a graph $G$ closed if it is closed with respect to some labeling of its vertices.
 It was observed  in \cite{HHHKR} that a closed graph must be chordal and claw free. However the class of closed graphs is much smaller than that of chordal and claw free graphs.  Interesting combinatorial characterizations of closed graphs are given in \cite{CE} and \cite{EHH}.

By what we said  so far it follows that all closed graphs are Koszul. On the other hand, it is not hard to find non-closed graphs that are Koszul. Thus the problem  arises to classify all Koszul graphs. In Section~\ref{one} we show that Koszul graphs must be closed and claw free. Thus we have the implications
\[
\text{closed graph \implies Koszul graph \implies chordal and  claw free graph.}
\]
The first implication cannot be  reversed. In Section 2 we give an example of a graph which is chordal and claw free but not Koszul. Thus the second implication cannot be reversed as well.  The results that we have so far allow a classification of all Koszul graphs whose cliques are of dimension at most 2.

\section{Koszul graphs are chordal and claw free}
\label{one}
The goal of this section is to prove the statement made in the section title. We first recall some concepts from graph theory. Let $G$ be a finite simple graph,  that is, a graph with no loops or multiple edges. We denote by $V(G)$ the set of vertices and by $E(G)$  the set of edges of  $G$. A {\em cycle} $C$ of $G$ of length $n$ is a  subgraph of $G$ whose vertices $V(C)=\{v_1,\ldots,v_n\}$  can be labeled such that the edges of $C$ are $\{v_i,v_{i+1}\}$ for $i=1,\ldots,n-1$ and $\{v_1,v_n\}$. A graph $H$ is called an {\em induced subgraph} of $G$ if there exists a subset $W\subset V(G)$  with $V(H)=W$ and $E(H)=\{\{u,v\}\in E(G)\:\; u,v\in W\}$. The graph $G$ is called {\em chordal} if  any cycle $C$ of $G$ has a chord, where a chord of $C$ is defined to be an edge $\{u,v\}$ of $G$ with $u,v\in V(C)$ but $\{u,v\}\not\in E(C)$. Finally, the graph $Cl$ with $V(Cl)=\{v_1,v_2,v_3,v_4\}$ and $E(Cl)=\{\{v_1,v_2\}, \{v_1,v_3\},\{v_1,v_4\}\}$ is called a {\em claw}, and $G$ is called {\em claw free} if $G$ does not contain an induced subgraph which is isomorphic to $Cl$.

Now we are in the position to formulate the main result of this section.

\begin{Theorem}
\label{chordalclawfree}
Let $G$ be a Koszul graph. Then $G$ is chordal and claw free.
\end{Theorem}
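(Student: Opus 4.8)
The plan is to prove the contrapositive by localizing each failure to a small forbidden induced subgraph. The engine is that the Koszul property is inherited by induced subgraphs. If $H$ is the induced subgraph of $G$ on a vertex set $W$, write $S_W=K[x_i,y_i : i\in W]$. Sending $x_j,y_j\mapsto 0$ for $j\notin W$ gives a graded surjection $S/J_G\to S_W/J_H$, because this substitution kills every generator $f_{ij}$ with an endpoint outside $W$ and fixes those with both endpoints in $W$, so the image of $J_G$ is exactly $J_H$. The inclusion $S_W\hookrightarrow S$ induces a section of this map, since $J_H$ is generated by the $f_{ij}$ with $\{i,j\}\in E(H)\subseteq E(G)$. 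Hence $S_W/J_H$ is a graded algebra retract of $S/J_G$, and as the Koszul property descends along algebra retracts, every induced subgraph of a Koszul graph is Koszul. It therefore suffices to show that the claw $Cl$ is not Koszul (forcing claw-freeness) and that every cycle $C_m$ with $m\ge 4$ is not Koszul; since a non-chordal graph has an induced cycle of length $\ge 4$, the latter forces chordality.

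To certify non-Koszulness I would use the third deviation. For $R=S/I$ with $I$ generated by quadrics, Koszulness forces all deviations onto the diagonal, which in homological degree three means $\beta^S_{2,j}(R)=0$ for every $j\ge 5$ and that every minimal first syzygy of $I$ in degree $4$ is a Koszul (trivial) syzygy $f_j\mathbf e_i-f_i\mathbf e_j$. So any minimal first syzygy living in degree $\ge 5$, or any non-Koszul minimal first syzygy in degree $4$ (present when $I$ has no linear syzygies), produces a nonzero off-diagonal $\epsilon_3$ and hence obstructs Koszulness. For the claw, whose generators are $f_{12},f_{13},f_{14}$, the three \emph{non-edge} minors $f_{34},f_{24},f_{23}$ assemble into the Pl\"ucker relation $f_{34}f_{12}-f_{24}f_{13}+f_{23}f_{14}=0$, a first syzygy of internal degree $4$. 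A short Hilbert-function count shows $J_{Cl}$ has no linear syzygies, and the Pl\"ucker syzygy is not a combination of Koszul syzygies (its coefficients $f_{34},f_{24},f_{23}$ are not in $J_{Cl}$), so $\epsilon_{3,4}(S/J_{Cl})\neq 0$ and the claw is not Koszul.

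For the cycle $C_m$ I would build the analogous ``cycle syzygy.'' Starting from the standard linear relations $x_af_{bc}-x_bf_{ac}+x_cf_{ab}=0$ (and their $y$-analogues) attached to a triangulation of the $m$-gon, one telescopes around the cycle so as to cancel all the chord-minors $f_{ij}$ with $\{i,j\}\notin E(C_m)$, leaving a genuine syzygy among $f_{12},f_{23},\dots,f_{m,1}$ whose coefficients are monomials of degree $m-2$. Since $J_{C_m}$ contains no nonzero monomial, these coefficients lie outside $J_{C_m}$, so the syzygy is minimal and is not a Koszul syzygy; thus $\beta^S_{2,m}(S/J_{C_m})\neq 0$. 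For $m\ge 5$ this already lies in degree $\ge 5$, violating the first part of the deviation condition, while for $m=4$ it is a non-Koszul first syzygy in degree $4$ with no competing linear syzygies, exactly as for the claw. In both regimes $\epsilon_{3,\bullet}(S/J_{C_m})$ is nonzero off the diagonal, so $C_m$ is not Koszul.

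The step I expect to be the main obstacle is the uniform control of the cycle syzygy: one must verify that the telescoping of the Pl\"ucker-type linear relations really closes up to an honest syzygy of internal degree exactly $m$, that its leading coefficients are genuine monomials (so that minimality follows from the absence of monomials in $J_{C_m}$), and that no lower first syzygies of $J_{C_m}$ reduce it. Equivalently, the task is to exhibit one explicit minimal element of $\beta^S_{2,m}(S/J_{C_m})$ for every $m\ge 4$. Everything else in the argument is formal: the retract reduction and the off-diagonal vanishing of $\epsilon_3$ for Koszul algebras; the combinatorial heart is this single family of cycle syzygies.
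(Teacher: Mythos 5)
Your overall architecture is the same as the paper's: reduce to induced subgraphs via the algebra retract $S_W/J_H\hookrightarrow S/J_G\to S_W/J_H$, then certify that the claw and the chordless $m$-cycles are not Koszul by producing an off-diagonal obstruction in homological degree $2$ over $S$ (equivalently, in the third step of the Tate resolution). Where you genuinely diverge is in the degree-$4$ cases: the paper's Lemma~\ref{necessary} only covers $\beta_{2,j}^S\neq 0$ for $j>4$, and the paper disposes of the claw and the $4$-cycle by Singular computations, whereas you propose a hand argument via the criterion that a Koszul quadratic algebra has $\epsilon_{3,4}=0$, i.e.\ every minimal degree-$4$ first syzygy of $I$ must lie in the span of the Koszul syzygies. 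That criterion is correct and is in fact delivered by the paper's own long-exact-sequence computation (the image of the connecting map $\delta$ is $H_1(X^{(1)})^2$, which is exactly the span of the classes of the Koszul syzygies, so a degree-$4$ syzygy class outside it survives to $H_2(X^{(2)})_4$ and forces $\beta^{R}_{3,4}(K)\neq 0$); but as written you are invoking a statement strictly stronger than the lemma the paper proves, so you would need to spell out that extra step. Granting it, your Pl\"ucker-relation treatment of the claw is a nice computer-free alternative, modulo actually carrying out the ``short Hilbert-function count'' that $J_{Cl}$ has no linear syzygies.

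The genuine gap is in the minimality of the cycle syzygy, and you have correctly located it yourself without closing it. Exhibiting $g=\sum_i\bigl(\prod_j x_j/(x_ix_{i+1})\bigr)e_i$ and observing that its coefficients are monomials, hence not in $J_{C_m}$, does \emph{not} by itself show that $g$ is a minimal generator of the syzygy module: one must rule out $g\in\mm\cdot\mathrm{Syz}$, i.e.\ that $g$ is a combination of lower-degree syzygies with coefficients in $\mm$. The paper closes this with two facts you do not supply: (a) any syzygy of $f_{12},\dots,f_{m1}$ having some coefficient equal to zero has \emph{all} its coefficients in $J_{C_m}$, because any $m-1$ of the edge binomials form a regular sequence; and (b) by the $\ZZ^n$-multigrading with $\deg x_i=\deg y_i=\epsilon_i$, any multihomogeneous syzygy with all coefficients nonzero has multidegree at least $\sum_i\epsilon_i$, hence total degree at least $m$. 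Together these show every syzygy of degree $<m$ has coefficients in $J_{C_m}$, so $g$, whose coefficients are monomials, cannot be reduced; the same facts also dispose of your unproved assertion that $J_{C_4}$ has no linear syzygies. Without (a) and (b) — or some substitute — the claim $\beta_{2,m}^S(S/J_{C_m})\neq 0$, which is the combinatorial heart of the whole proof, remains unproved.
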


For the proof of this theorem we shall need the following lemma which provides a necessary condition for Koszulness.

\begin{Lemma}
\label{necessary}
Let $S=K[x_1,\ldots,x_n]$ be the polynomial ring over the field $K$ in the variables $x_1,\ldots, x_n$, and let $I\subset S$ be a graded ideal of $S$ generated by quadrics. Denote the graded Betti numbers of $S/I$ by $\beta_{ij}^S(S/I)$ and suppose that $\beta_{2j}^S(S/I)\neq 0$ for some $j>4$. Then $S/I$ is not Koszul.
\end{Lemma}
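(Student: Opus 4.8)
The plan is to derive the lemma from the homological characterization of Koszulness together with a change of rings spectral sequence. Write $R=S/I$, and note that since $S_0=R_0=K$ the field $K$ is the common residue field of $S$ and of $R$. By the definition adopted in the introduction, $R$ is Koszul exactly when $\mm_R$ has a linear $R$-resolution; applying this to the sequence $0\to\mm_R\to R\to K\to 0$, this is equivalent to saying that $\Tor_i^R(K,K)$ is concentrated in internal degree $i$ for every $i$. On the $S$-side, $\Tor_i^S(K,K)=\bigwedge^i(S_1)$ is concentrated in internal degree $i$ as well, being computed by the Koszul complex of $S$, while $\Tor_q^S(R,K)=\bigoplus_j K(-j)^{\beta_{qj}^S(R)}$ records precisely the graded Betti numbers of $R$ over $S$.

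The tool I would use is the (graded) Cartan--Eilenberg change of rings spectral sequence
\[
E^2_{p,q}=\Tor_p^R\bigl(K,\Tor_q^S(R,K)\bigr)\ \Longrightarrow\ \Tor_{p+q}^S(K,K),
\]
a first quadrant homological spectral sequence of graded $K$-vector spaces whose differentials $d_r\colon E^r_{p,q}\to E^r_{p-r,q+r-1}$ preserve internal degree. (A sanity check of the edge at total degree $1$, where $\Tor_1^S(K,K)$ is pure of degree $1$ while $E^2_{0,1}=K(-2)^{\beta_{12}^S(R)}$ must be cancelled by $d_2$ against the quadratic relations inside $\Tor_2^R(K,K)$, confirms the orientation.) Suppose now, for contradiction, that $R$ is Koszul while $\beta_{2j}^S(R)\neq 0$ for some $j\ge 5$. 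Then $E^2_{0,2}=\Tor_2^S(R,K)$ has a nonzero component of dimension $\beta_{2j}^S(R)$ in internal degree $j$, and I would chase this class to $E^\infty$.

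The key step is the degree bookkeeping at the corner $(0,2)$. No differential leaves $(0,2)$, since the target of every $d_r$ has negative first index; and the only differentials entering $(0,2)$ are $d_2\colon E^2_{2,1}\to E^2_{0,2}$ and $d_3\colon E^3_{3,0}\to E^3_{0,2}$, because for $r\ge 4$ the source $E^r_{r,3-r}$ has negative second index. Since $I$ is generated by quadrics, $\Tor_1^S(R,K)=K(-2)^{\beta_{12}^S(R)}$, so $E^2_{2,1}=\Tor_2^R(K,K)(-2)^{\beta_{12}^S(R)}$, while $E^3_{3,0}$ is a subquotient of $\Tor_3^R(K,K)$. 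Invoking the Koszul hypothesis, $\Tor_2^R(K,K)$ lives in internal degree $2$ and $\Tor_3^R(K,K)$ in internal degree $3$, so the images of $d_2$ and $d_3$ lie in internal degrees $4$ and $3$ respectively. As $j\ge 5$, the degree-$j$ class is untouched and survives to $E^\infty_{0,2}$. But $E^\infty_{0,2}$ is a subquotient of $\Tor_2^S(K,K)$, which is concentrated in internal degree $2$; a nonzero class in degree $j\ge 5$ is impossible. This contradiction forces $\beta_{2j}^S(R)=0$ for all $j>4$, which is the claim.

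The step I expect to be the main obstacle is the careful handling of the spectral sequence and its grading: one must confirm that it is compatible with the internal grading so that the differentials are degree preserving, and correctly check that the only incoming differentials at $(0,2)$ emanate from terms whose internal degrees are pinned down by Koszulness to $3$ and $4$. Once this is in place, the argument is a short edge chase. I would also remark that the statement is exactly the case $i=2$ of the general bound $t_i^S(R)\le 2i$ for Koszul algebras, and that the same spectral sequence argument yields that bound; if preferred, the lemma could simply be cited from there.
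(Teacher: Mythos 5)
Your argument is correct, but it follows a genuinely different route from the paper. The paper gives a hands-on proof via Tate's construction of the minimal DG-algebra resolution of $K$ over $R=S/I$: starting from the Koszul complex $X^{(1)}$ on the variables, it adjoins degree-$2$ variables killing the quadric relations to form $X^{(2)}$, and then uses the long exact homology sequence of $0\to X^{(1)}\to X^{(2)}\to X^{(2)}/X^{(1)}\to 0$ to show that the image of the connecting map into $H_2(X^{(1)})\cong\Tor_2^S(R,K)$ is exactly $H_1(X^{(1)})^2$, which sits in internal degree $4$; hence a class of degree $j>4$ in $\Tor_2^S(R,K)$ survives into $H_2(X^{(2)})$ and forces $\beta^{R}_{3,j}(K)\neq 0$, contradicting linearity of the resolution of $K$ over $R$. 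Your change-of-rings spectral sequence $E^2_{p,q}=\Tor_p^R(K,\Tor_q^S(R,K))\Rightarrow\Tor_{p+q}^S(K,K)$ packages the same obstruction: your incoming differentials $d_2\colon E^2_{2,1}\to E^2_{0,2}$ (internal degree $4$ under Koszulness) and $d_3\colon E^3_{3,0}\to E^3_{0,2}$ (internal degree $3$) play precisely the role of the image of the paper's connecting homomorphism, and the degree bookkeeping at the corner $(0,2)$ is sound, as is the observation that nothing leaves that corner. What your approach buys is brevity and immediate generality --- the same edge chase gives $t_i^S(R)\le 2i$ for all $i$, which is in effect the Avramov--Conca--Iyengar bound that the paper itself cites as an alternative justification; what the paper's approach buys is self-containedness (no appeal to the graded compatibility of the Cartan--Eilenberg spectral sequence, which you correctly flag as the point needing care) and an explicit identification of the offending third syzygy of $K$ over $R$. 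Either proof is acceptable.
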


This lemma is an immediate consequence of Formula (2) given in the introduction of \cite{ACI},  where, as a consequence of results in that paper,  it is stated that if $S/I$ is Koszul, then  $t_{i+1}(S/I)\leq t_i(S/I)+2$ for $i\leq \codim S/I+1$. Here  $t_i(S/I)=\max\{j\:\; \beta_{ij}^S(S/I)\neq 0\}$ for $i=0,\dots, \projdim S/I$.

\medskip
For the convenience of the reader we give a direct proof of the lemma: let $(R,\mm, K)$ be a (Noetherian) local ring or a standard graded $K$-algebra (in which case we assume that $\mm$ is the graded maximal ideal of $R$). Tate in his famous paper \cite{T} constructed an $R$-free resolution
\[
X\: \cdots \To X_i \To \cdots \To X_2\To X_1\To X_0\To 0,
\]
of the residue class field $R/\mm=K$, that is, an acyclic complex of finitely generated free $R$-modules $X_i$ with $H_0(X)=K$,  admitting an additional structure, namely the structure of a differential graded $R$-algebra. It was Gulliksen \cite{G} who proved that if Tate's construction is minimally done, as explained below, then $X$ is indeed a minimal free $R$-resolution of $K$. For details we refer to the original paper of Tate and to a modern treatment of the theory as given in \cite{A}.

Here we sketch Tate's construction as much as is needed to prove the lemma. In Tate's theory $X$ is a DG-algebra, that is,  a graded skew-symmetric $R$-algebra with  free $R$-modules $X_i$ as graded components and $X_0=R$,  equipped with a differential $d$ of degree $-1$ such that
\begin{eqnarray}
\label{product}
d(ab)=d(a)b+(-1)^iad(b)
\end{eqnarray}
for $a\in X_i$ and $b\in X$. Moreover, $(X,d)$ is an acyclic complex with $H_0(X)=K$.

The algebra $X$ is constructed by adjunction of variables: given any DG-algebra $Y$ and a cycle $z\in Y_i$, then the DG-algebra $Y'=Y\langle T\:\, dT=z\rangle $ is obtained  by adjoining the variable $T$ of degree $i+1$ to $Y$ in order to kill the cycle $z$.

If $i$ is even we let
\[
Y_j'=Y_j\dirsum Y_{j-i-1}T \quad \text{with $T^2=0$ and $d(T)=z$.}
\]
If $i$ is odd we  let
\[
Y_j'=X_j\dirsum X_{j-(i+1)}T^{(1)}\dirsum X_{j-2(i+1)}T^{(2)} \dirsum\cdots \quad
\]
with  $T^{(0)}=1$,  $T^{(1)}=T$, $T^{(i)}T^{(j)} = ((i + j)!/i!j!)T^{(i+j)}$ and $d(T^{(i)})=zT^{(i-1)}$. The $T^{(j)}$ are called the divided powers of $T$. The degree of $T^{(j)}$ is defined to be $j\deg T$.

\medskip
The construction of $X$ proceeds as follows:  Say, $\mm$ is minimally generated by $x_1,\ldots, x_n$. Then we adjoin to $R$ (which is a DG-algebra concentrated in homological degree 0)  the variables $T_{11},\ldots,T_{1n}$ of degree 1 with $d(T_{1i})=x_i$. The DG-algebra $X^{(1)}=R\langle T_{11},\ldots,T_{1n}\rangle$ so obtained
is nothing but the Koszul complex of the sequence $x_1,\ldots,x_n$ with values in $R$. If $X^{(1)}$ is acyclic, then $R$ is regular and $X=X^{(1)}$ is the Tate resolution of $K$. Otherwise $H_1(X^{(1)})\neq 0$ and we choose cycles $z_1,\ldots, z_m$ whose homology classes form a $K$-basis of  $H_1(X^{(1)})$, and we adjoin variables $T_{21},\ldots,T_{2m}$ of degree $2$  to  $X^{(1)}$ with $d(T_{2i})=z_i$ to obtain $X^{(2)}$. It is then clear that $H_j(X^{(2)})=0$ for $j=1$.  Suppose $X^{(k)}$ has been already constructed with $H_j(X^{(k)})=0$ for $j=1,\ldots,k-1$. We first observe that $H_k(X^{(k)})$ is annihilated by $\mm$. Indeed, let $z$ be a cycle of $X^{(k)}$, then $x_iz=d(T_{1i}z)$, due to the product rule (\ref{product}).  Now  one chooses a $K$-basis  of cycles representing the homology classes of $H_k(X^{(k)})$ and adjoins variables in degree $k+1$ to kill these cycles, thereby obtaining $X^{(k+1)}$. In this way one obtains a chain of DG-algebras
\[
R=X^{(0)}\subset X^{(1)}\subset X^{(2)} \subset \cdots \subset X^{(2)} \subset\cdots
\]
which in the limit yields the Tate resolution $X$ of $K$. It is clear that if $R$ is standard graded then in each step the representing cycles that need to be killed can be chosen to be  homogeneous, so that $X$ becomes a graded minimal free $R$-resolution of $K$ if we assign to the variables $T_{ij}$ inductively the degree of the cycles they do  kill and apply the following rule:  denote the internal degree (different from the homological degree) of a homogeneous element $a$ of $X$ by $\Deg(a)$. Then we require that $\Deg T^{(i)}=i\Deg T$ for any variable of even homological degree and furthermore  $\Deg(ab)=\Deg(a)+\Deg(b)$ for any two homogeneous elements in $X$.

\medskip
Now we are ready to prove Lemma~\ref{necessary}: the Koszul complex $X^{(1)}$ as a DG-algebra over $S/I$ is generated  by the variable $T_{1i}$ with $d(T_{1i})=x_i$ for $i=1,\ldots,n$. Thus $\Deg T_{1i}=1$ for all $i$. Let $f_1,\ldots,f_m$ be quadrics which minimally generate $I$, and write $f_i=\sum_{j=1}^mf_{ij}x_j$ with suitable linear forms $f_{ij}$. Then $H_1(X^{(1)})$ is minimally generated by the homology classes of the cycles $z_i=\sum_{j=1}^mf_{ij}T_{1j}$. Let $T_{2i}\in X^{(2)}$ be the variables of homological degree $2$ with $d(T_{2i})=z_i$ for $i=1,\ldots,m$. Then $\Deg T_{2i}=\Deg z_i=2$ for all $i$. To proceed in the construction of $X$ we have to kill the cycles $w_1,\ldots,w_r$ whose homology classes form a $K$-basis of  $H_2(X^{(2)})$. Since $\Tor_i(K,S/I)\iso H_i(X^{(1)})$, our hypothesis implies that there is a cycle $z\in (X^{(1)})_2$ with $\Deg z=j>4$ which is not a boundary. Of course $z$ is also a cycle in $X^{(2)}$ because $X^{(1)}$ is a subcomplex of $X^{(2)}$. We claim that $z$ is not a boundary in $X^{(2)}$. To see this we consider the exact sequence of complexes
\[
0\To X^{(1)}\To X^{(2)}\To X^{(2)}/X^{(1)}\To 0,
\]
which induces the long exact sequence
\begin{eqnarray*}
\label{connecting}
\begin{CD}
\cdots @>>>  H_3(X^{(2)}/X^{(1)})@>\delta >> H_2(X^{(1)})\To H_2(X^{(2)})@>>>  \cdots
\end{CD}
\end{eqnarray*}
Thus it suffices to show that the homology class $[z]$ of the cycle $z$ is not in the image of $\delta$. Notice that the elements $T_{1i}T_{2j}$ form a basis of the free $S$-module $(X^{(2)}/X^{(1)})_3$ and that the differential on  $X^{(2)}/X^{(1)}$ maps $T_{1i}T_{2j}$ to $x_iT_{2j}$, so that $w\in (X^{(2)}/X^{(1)})_3$ is a cycle if and only if $w=\sum_{j=1}^mw_jT_{2j}$ where each $w_j\in X^{(1)}_1$  is a cycle. Now the connecting homomorphism $\delta$ maps $[w]$ to $[-\sum_{j=1}^mw_jz_j]$. It follows that $\Im \delta=H_1(X^{(1)})^2$. Since $H_1(X^{(1)})$ is generated in degree $2$ we conclude that the subspace $H_1(X^{(1)})^2$ of $H_2(X^{(1)})$  is generated in degree $4$. Hence our element $[z]\in H_2(X^{(1)})$ which is of degree $>4$ cannot be in the image of $\delta$, as desired.

Thus the homology class of $z$,  viewed as an element of $H_2(X^{(2)})$ has to be killed by adjoining a variable a variable of degree $j>4$. This shows that $\beta_{3j}^{S/I}(S/\mm)\neq 0$, and hence $S/I$ is not Koszul.

\begin{proof}[Proof of Theorem~\ref{chordalclawfree}]
We may assume that $[n]$ is the vertex set of $G$. Let  $H$ by  any induced subgraph of $G$. We may further assume that $V(H)=[k]$.  Let $S=K[x_1,\ldots,x_n,y_1,\ldots,y_n]$ and $T=K[x_1,\ldots,x_k,y_1,\ldots,y_k]$. Then $T/J_H$ is an algebra retract of $S/J_G$. Indeed, let $L=(x_{k+1},\ldots,x_n,y_{k+1},\ldots,y_n)$. Then the composition $T/J_H\to S/J_G\to S/(J_G, L)\iso T/J_H$ of the natural $K$-algebra homomorphisms is an isomorphism. It follows therefore from \cite[Corollary 2.6]{OHH} that any induced subgraph of a $G$ is again Koszul.

Suppose  that $G$ is not claw free. Then there exists an induced subgraph $H$ of $G$ which is isomorphic to a claw. We may assume that $V(H)=\{1,2,3,4\}$, and let $R=K[x_1,\ldots,x_4,y_1,\ldots,y_4]$. A computation with Singular \cite{DGPS} shows that $\beta^{R/J_H}_{3,5}(K)\neq 0$. Thus $H$ is not Koszul, a contradiction.

Suppose that $G$ is not chordal. Then there exist a cycle $C$ of length $\geq 4$ which has no chord. Then $C$ is an induced subgraph and hence should be Koszul. We may assume that $V(C)=\{1,2,\ldots,m\}$ with edges $\{i,i+1\}$ for $i=1,\ldots,m-1$ and edge $\{1,m\}$ and set $T=K[x_1,\ldots,x_m,y_1,\ldots,y_m]$. We claim that $\beta_{2,m}^T(T/J_C)\neq 0$. For $m>4$  this will imply that $C$ is not Koszul. That a $4$-cycle is not Koszul can again be directly checked with Singular \cite{DGPS}.

In order to prove the claim  we let $F=\Dirsum_{i=1}^mS e_{i}$ and consider the free presentation
\[
\epsilon\:\; F\to I\To 0,\quad \text{$e_{i}\mapsto f_{i,i+1}$ for $i=1,\ldots,m$}
\]
For simplicity, here and in the following,   we read $m+1$ as $1$.

 Obviously, $g=\sum_{i=1}^m(\prod_{j=1}^mx_j)/(x_ix_{i+1})e_i\in\Ker \epsilon$. We will show that $g$ is a minimal generator
of $\Ker \epsilon$. Indeed, let $g'=\sum_{i=1}g_ie_i\in \Ker \epsilon$ be an arbitrary relation, and suppose that some $g_j=0
$. Since the $f_{i,i+1}$ for $i\neq j$ form a regular sequence, it then follows that all the other $g_i$ belong to $J_C$.
However, since the coefficients of $g$ do not belong to $J_C$, we conclude that $g$ cannot be written as a linear
combination of relations for which one of its  coefficients is zero.

 Now assume that all $g_i\neq 0$. Let  $\epsilon_i$ denotes the $i$th canonical unit vector of $\ZZ^n$.  Since $J_C$ is a $
\ZZ^n$-graded ideal with $\deg_{\ZZ^n} x_i= \deg_{\ZZ^n} y_i=\epsilon_i$,  we may assume that  $g^\prime=\sum_{i=1}g_ie_i$ is a
homogeneous relation where  $\deg_{\ZZ^n} e_i=\deg f_{i,i+1}=\epsilon_i+\epsilon_{i+1}$ and $g_i$ is homogeneous satisfying
$\deg_{\ZZ^n} g'= \deg_{\ZZ^n} g_i+\epsilon_i+\epsilon_{i+1}$ for all $i$. This is only possible if $\deg_{\ZZ^n} g'\geq \sum
_{i=1}^m \epsilon_i$, coefficientwise. In particular it follows that $\deg g'\geq m$, where $\deg g'$ denotes the total
degree of $g'$. Thus $g$ cannot be a linear combination of relations of lower (total) degree and hence is a minimal
generator of $\Ker \epsilon$. Since $\deg g=m$, we conclude that $\beta^T_{2,m}(T/J_C)\neq 0$.
\end{proof}

\section{Gluing of Koszul graphs along a vertex}
\label{two}

In this section we first show that Koszulness is preserved under the operation of gluing two graphs along a vertex in the sense
that we are going to explain below.

We begin with two general statements about Koszul algebras.

\begin{Proposition}\label{tensor}
Let $R=K[x_1,\ldots,x_n]/I$ and $S=K[x_{n+1},\ldots,x_m]/J$ be two standard graded $K$-algebras. Then $R\otimes_K S$ is
Koszul if and only if $R$ and $S$ are Koszul.
\end{Proposition}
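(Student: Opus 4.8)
The plan is to reduce everything to the standard homological characterization of Koszulness: a standard graded $K$-algebra $A$ with graded maximal ideal $\mm_A$ is Koszul if and only if the residue field $K=A/\mm_A$ has a linear minimal graded free resolution over $A$, equivalently $\Tor_i^A(K,K)_j=0$ whenever $i\neq j$. Set $A=R\otimes_K S$. Since $R$ and $S$ are built from disjoint sets of variables, $A$ is again a standard graded $K$-algebra with $A_0=K$ and graded maximal ideal $\mm_A=\mm_R\otimes_K S+R\otimes_K\mm_S$, so that $A/\mm_A=K$. Thus ``$A$ is Koszul'' is meaningful, and the whole proposition will fall out of a bigraded K\"unneth formula for $\Tor^A_\bullet(K,K)$.

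To establish that formula, let $F_\bullet$ be the minimal graded free resolution of $K$ over $R$ and $G_\bullet$ the minimal graded free resolution of $K$ over $S$. Each $F_p$ is free over $R$ and each $G_q$ free over $S$, so $F_p\otimes_K G_q$ is free over $A$, and I would form the total complex $C_\bullet=\operatorname{Tot}(F_\bullet\otimes_K G_\bullet)$ of free $A$-modules, with $C_i=\Dirsum_{p+q=i}F_p\otimes_K G_q$ and differential $d^F\otimes 1+(-1)^p\,1\otimes d^G$. Since we work over the field $K$, the K\"unneth theorem gives $H_\bullet(C)\iso H_\bullet(F)\otimes_K H_\bullet(G)=K\otimes_K K=K$, concentrated in homological degree $0$; hence $C_\bullet$ is a free $A$-resolution of $K$.

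Next I would verify that $C_\bullet$ is minimal. Because $F_\bullet$ and $G_\bullet$ are minimal, the matrices of $d^F$ and $d^G$ have entries in $\mm_R$ and $\mm_S$ respectively, hence in $\mm_A$, so every entry of the total differential lies in $\mm_A$. Consequently $C_\bullet$ computes the Betti numbers of $K$ over $A$ directly, and tracking the internal grading yields the bigraded isomorphism
\[
\Tor^A_i(K,K)_j\iso\Dirsum_{\substack{p+q=i\\ d+e=j}}\Tor^R_p(K,K)_d\otimes_K\Tor^S_q(K,K)_e.
\]

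With this formula in hand, both implications are immediate. If $R$ and $S$ are Koszul, a summand on the right is nonzero only when $d=p$ and $e=q$, forcing $j=d+e=p+q=i$; hence $\Tor^A_i(K,K)_j=0$ for $i\neq j$, so $A$ is Koszul. Conversely, assume $A$ is Koszul but, say, $R$ is not. Then $\Tor^R_p(K,K)_d\neq 0$ for some $d\neq p$, and pairing this factor with $\Tor^S_0(K,K)_0=K$ produces a nonzero contribution to $\Tor^A_p(K,K)_d$ with $d\neq p$, contradicting Koszulness of $A$; by symmetry the same argument handles $S$. The one point that genuinely demands care is the simultaneous verification that $C_\bullet$ is acyclic and minimal; once the K\"unneth computation over $K$ and the containment of the differential entries in $\mm_A$ are secured, the remainder is formal bookkeeping of the two gradings.
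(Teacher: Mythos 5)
Your proposal is correct and follows essentially the same route as the paper: both form the total complex of the tensor product of the two minimal graded free resolutions of $K$, observe it is a minimal free resolution over $R\otimes_K S$, and read off both implications from the resulting bigraded decomposition of $\Tor^{R\otimes_K S}(K,K)$. Your treatment of the converse (pairing a non-linear syzygy of $R$ with $\Tor_0^S(K,K)$) is a minor variant of the paper's degree-counting argument and equally valid.
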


\begin{proof}
Let $\mm$ and $\nn$ be the maximal ideals of $R$ and, respectively, $S.$ Let $\FF\to R/\mm\to 0$ and
$\GG\to S/\nn\to 0$ be the minimal graded free resolutions of $R/\mm$ over $R$ and, respectively, of $S/\nn$ over $
S.$ Then the total complex of $\FF\otimes \GG$ is the minimal graded free resolution over $R\otimes S$
of the maximal graded ideal of $R\otimes S.$ If $F_i=\bigoplus_{k}R(-k)^{\beta_{ik}}$ for all $i$ and
$G_j=\bigoplus_\ell S(-\ell)^{\beta^\prime_{j \ell}}$ for all $j,$ then $$F_i\otimes G_j\cong \bigoplus_{k,\ell} R\otimes S(-k-\ell)^{\beta_{ik}\beta^\prime_{j\ell}}.$$ This
immediately implies the desired conclusion  if $\FF$ and $\GG$ are linear. For the converse, we note that
$\Tor_p^{R\otimes S}(K,K)\cong \bigoplus_{{i+j}=p}\bigoplus_{k,\ell}K(-k-\ell)^{\beta_{ik}\beta^\prime_{j\ell}}$. Then we must
have $k+\ell=p$ for all $i,j$ with $i+j=p.$ As $k\geq i$ and $\ell\geq j,$ it follows that $k=i$ and $\ell=j$ for all $i,j.$
Therefore, $\FF$ and $\GG$ are linear resolutions as well.
\end{proof}

The above proposition shows, in particular, that it is enough to study the Koszul property for connected graphs.

\begin{Corollary}\label{connected}
Let $G$ be a graph with connected components $G_1,\ldots,G_r.$ Then $G$ is Koszul if and only if $G_i$ is Koszul for  $1\leq
i\leq r.$
\end{Corollary}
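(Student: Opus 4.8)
The plan is to reduce Corollary~\ref{connected} to Proposition~\ref{tensor} by recognizing that the polynomial ring $S/J_G$ for a disconnected graph factors as a tensor product over the connected components. First I would set up notation: write $[n]$ for the vertex set of $G$, and let the connected component $G_i$ have vertex set $V_i \subset [n]$, so that the $V_i$ partition $[n]$. The key structural observation is that because $G$ has no edges running between distinct components, the generators $f_{k\ell} = x_k y_\ell - x_\ell y_k$ of $J_G$ each involve only variables indexed by vertices lying in a single component. Consequently $J_G$ decomposes according to the components, and I would make this precise as follows.

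For each $i$, set $S_i = K[x_k, y_k : k \in V_i]$ and let $J_{G_i} \subset S_i$ be the binomial edge ideal of $G_i$. Since $S = K[x_1,\ldots,x_n,y_1,\ldots,y_n] \iso S_1 \otimes_K \cdots \otimes_K S_r$ and $J_G$ is generated by the union of the generating sets of the $J_{G_i}$ (viewed inside $S$ via the inclusions $S_i \hookrightarrow S$), I would argue that
\[
S/J_G \iso (S_1/J_{G_1}) \otimes_K \cdots \otimes_K (S_r/J_{G_r}).
\]
The cleanest way to see this is to note that $J_G = \sum_{i=1}^r J_{G_i} S$, and that the quotient of a tensor product of $K$-algebras by the sum of the extended defining ideals is the tensor product of the individual quotients. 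Once this isomorphism is established, the corollary follows by induction on $r$: Proposition~\ref{tensor} handles the case $r=2$ (grouping the first component against the tensor product of the rest), and iterating gives that $S/J_G$ is Koszul if and only if each $S_i/J_{G_i}$ is Koszul, which is precisely the statement that $G$ is Koszul if and only if every $G_i$ is Koszul.

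The main obstacle, such as it is, lies in the bookkeeping of the tensor decomposition rather than in any deep idea: one must verify carefully that the generators of $J_G$ really do respect the partition into components and that no edge-binomial mixes variables from two different $V_i$. This is immediate from the definition of connected component, so the argument is essentially formal. A minor technical point is that Proposition~\ref{tensor} is stated for two factors on disjoint variable sets, so to apply it inductively I would group the variables as $K[x_k,y_k : k\in V_1]$ against $K[x_k,y_k : k \notin V_1]$, recognizing the latter quotient as $S_2/J_{G_2} \otimes_K \cdots \otimes_K S_r/J_{G_r}$ by the induction hypothesis; this keeps each application of the proposition in the required two-factor form with disjoint variables.
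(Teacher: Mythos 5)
Your proposal is correct and follows exactly the paper's argument: the paper likewise observes that $S/J_G\iso\bigotimes_{i=1}^r S_i/J_{G_i}$ with $S_i=K[\{x_j,y_j: j\in V(G_i)\}]$ and then invokes Proposition~\ref{tensor}. Your additional remarks on the induction over $r$ and on verifying that no edge-binomial mixes components are just explicit versions of steps the paper leaves implicit.
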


\begin{proof}
Let $V(G)=[n]$ and $S=K[x_1,\ldots,x_n,y_1,\ldots,y_n].$ Then $S/J_G\cong\otimes_{i=1}^r S_i/J_{G_i}$ where $S_i=K[\{x_j,y_j: j\in V(G_i)\}]$ for $1\leq i\leq r.$ The
claim follows  by applying Proposition~\ref{tensor}.
\end{proof}

\begin{Proposition}\label{modreg}
Let $R$ be a standard graded $K$-algebra with maximal graded ideal $\mm$ and $f_1,\ldots,f_m\in \mm\setminus \mm^2$  a regular
sequence of homogeneous elements in $R.$ Then $R$ is Koszul if and only if $R/(f_1,\ldots,f_m)$ is Koszul.
\end{Proposition}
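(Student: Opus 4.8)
The plan is to reduce to a single regular element and then compare the Tate resolutions of $K$ over $R$ and over $\bar R=R/(f)$, exploiting the construction recalled above. First I would observe that, $R$ being standard graded, $R_j=R_1^j\subseteq\mm^2$ for $j\geq 2$, so $\mm^2=\Dirsum_{j\geq 2}R_j$ and $\mm/\mm^2=R_1$. Hence a homogeneous element of $\mm\setminus\mm^2$ is necessarily a nonzero linear form. A regular sequence $f_1,\ldots,f_m$ of such forms is $K$-linearly independent (a $K$-linear dependence would make some $f_i$ a zerodivisor modulo the others), and $R/(f_1)$ is again standard graded with the images of $f_2,\ldots,f_m$ a regular sequence of nonzero linear forms in its maximal ideal minus its square. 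By induction on $m$ it therefore suffices to treat $m=1$; write $f=f_1$ and $\bar R=R/(f)$.

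The heart of the argument is the bigraded identity
\[
\Tor^R_i(K,K)_j \;\iso\; \Tor^{\bar R}_i(K,K)_j \;\dirsum\; \Tor^{\bar R}_{i-1}(K,K)_{j-1}
\]
for all $i,j$, equivalently $P^R_K(s,t)=(1+st)\,P^{\bar R}_K(s,t)$ for the bigraded Poincar\'e series (here $s$ records homological and $t$ internal degree). To prove it I would run Tate's construction with a minimal generating set of $\mm$ chosen so that $f$ is a member, say $f=x_1$. Then the Koszul complex $X^{(1)}=R\langle T_{11},\ldots,T_{1n}\rangle$ factors as $R\langle T_{11}\:\,dT_{11}=f\rangle\tensor_R R\langle T_{12},\ldots,T_{1n}\rangle$, and since $f$ is a non-zerodivisor the first factor is a free resolution of $\bar R$. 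The key point is that, $f$ being regular, the further variables adjoined to build the acyclic closure of $K$ over $R$ may be taken to be exactly those adjoined over $\bar R$: the variable $T_{11}$ creates no new homology, so it forces no cascade of higher (divided-power) variables. This yields a decomposition of the Tate resolution $X$ over $R$ as $R\langle T_{11}\rangle\tensor_R V$, where $V\tensor_R\bar R$ is the acyclic closure of $K$ over $\bar R$. Because a Tate resolution is minimal, $X\tensor_R K$ has zero differential, so reading off ranks bidegree by bidegree gives $\Tor^R_*(K,K)\iso \Lambda\tensor_K\Tor^{\bar R}_*(K,K)$ with $\Lambda=K\dirsum Ke$ and $e$ in bidegree $(1,1)$, which is the displayed isomorphism.

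Finally I would conclude by recalling that a standard graded algebra is Koszul precisely when its Tor against the residue field is concentrated on the diagonal $i=j$. Since the shift $(i,j)\mapsto(i-1,j-1)$ appearing above preserves the condition $i\neq j$, the off-diagonal Tor of $R$ vanishes if and only if that of $\bar R$ does; hence $R$ is Koszul if and only if $\bar R$ is, completing the induction. The main obstacle is the bigraded identity itself: its real content is that killing a regular linear form in Tate's construction introduces a single exterior variable of bidegree $(1,1)$ and nothing more, and this is exactly the place where regularity of $f$ enters, via the result of Gulliksen \cite{G} on deviations that the paper has already invoked.
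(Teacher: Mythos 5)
Your argument is correct and follows essentially the same route as the paper: reduce by induction to a single element $f$, observe that a homogeneous element of $\mm\setminus\mm^2$ is a nonzero linear form, and transfer Koszulness through the relation between the minimal free resolutions of $K$ over $R$ and over $R/(f)$. The only difference is that the paper simply cites \cite[Theorem 2.2.3]{A} for the change-of-rings step, whereas you sketch a proof of that statement via Tate's construction (a single exterior variable of bidegree $(1,1)$ is added and the deviations are otherwise unchanged); your sketch is the standard argument and is sound.
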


\begin{proof}
By induction on $m,$ it is sufficient to prove the claim for $m=1.$ Let then $f\in R$ be a form of degree $1.$ We have to
show that $R$ is Koszul if and only if $R/(f)$ is Koszul or, equivalently, $K$ has a linear resolution over $R$ if and only
if it has a linear resolution over $R/(f)$. But this is  a direct consequence of \cite[Theorem 2.2.3]{A}.
\end{proof}

Now we come to the main subject of this section. By Corollary~\ref{connected}, in the sequel we may assume
that all the graphs are connected.

Let $G$ be a graph. A {\em clique} of $G$ is a complete subgraph of $G.$ The cliques
of $G$ form a simplicial complex $\Delta(G)$ which is called the {\em clique complex} of $G.$ The facets of $\Delta(G)$ are
the maximal cliques of $G$ with respect to inclusion. A {\em free} vertex of $\Delta(G)$ or, simply, of $G$ is a vertex of $G$ which belongs only to one facet of $\Delta(G).$

Let $G_1,G_2$ be to graphs such that $V(G_1)\cap V(G_2)=\{v\}$ and $v$ is a free vertex in $G_1$ and $G_2.$ Let $G=G_1\cup G_
2$ with $V(G)=V(G_1)\cup V(G_2)$ and $E(G)=E(G_1)\cup E(G_2)$. We say that $G$ is obtained by {\em gluing $G_1$ and $G_2$
along the vertex $v$}.

\begin{Theorem}\label{gluing}
Let $G$ be a graph obtained by gluing the graphs $G_1$ and $G_2$ along a vertex. Then $G$ is Koszul if and only if $G_1$ and
$G_2$ are Koszul.
\end{Theorem}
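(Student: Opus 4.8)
The plan is to deduce the equivalence from the two preservation results already in hand. Write $V(G_1)$ and $V(G_2)$ so that the gluing vertex occurs as two distinct symbols $v_1\in V(G_1)$ and $v_2\in V(G_2)$, put $S_1=K[x_i,y_i: i\in V(G_1)]$, $S_2=K[x_j,y_j: j\in V(G_2)]$ and $A=S_1/J_{G_1}$, $B=S_2/J_{G_2}$. Since $E(G)=E(G_1)\cup E(G_2)$ and no edge of $G$ joins $V(G_1)\setminus\{v\}$ to $V(G_2)\setminus\{v\}$, identifying $v_1$ with $v_2$ exhibits
\[
S/J_G\;\cong\;(A\otimes_K B)/(u,w),\qquad u=x_{v_1}-x_{v_2},\quad w=y_{v_1}-y_{v_2}.
\]
If I can show that $u,w$ is a regular sequence of linear forms on $A\otimes_K B$, then Proposition~\ref{modreg} gives that $A\otimes_K B$ is Koszul if and only if $S/J_G$ is Koszul, while Proposition~\ref{tensor} (see also Corollary~\ref{connected}) gives that $A\otimes_K B$ is Koszul if and only if both $A$ and $B$ are Koszul. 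Chaining the two equivalences yields the theorem in both directions simultaneously. Thus the entire content is the regularity of $u,w$, and this is exactly where the hypothesis that $v$ is a \emph{free} vertex must enter.

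The mechanism is the following. A free vertex lies in a unique maximal clique, hence is a cut vertex of no relevant subgraph, so by the description of the minimal primes of a binomial edge ideal in \cite{HHHKR} the vertex $v$ lies in no cut set $\mathcal{S}$ occurring in a minimal prime $P_{\mathcal{S}}$ of $J_{G_1}$ (and likewise of $J_{G_2}$). Consequently, modulo each minimal prime the images $\bar x_v,\bar y_v$ are the two coordinates of a vertex belonging to a complete-graph (determinantal) component, so they are nonzero and algebraically independent over $K$. Since $J_{G_1}$ is radical, it follows that $x_{v_1}$ is a nonzerodivisor on $A$ and, being transcendental modulo every minimal prime, that $A$ is torsion-free, hence graded free, over the polynomial subring $K[x_{v_1}]$; the same holds for $B$ over $K[x_{v_2}]$.

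With this in place the regular sequence is verified in two steps. Choosing a homogeneous $K[x_{v_1}]$-basis $\{e_i\}$ of $A$ gives $A\otimes_K B=\Dirsum_i B[x_{v_1}]e_i$, on which $x_{v_1}$ acts diagonally; thus $u=x_{v_1}-x_{v_2}$ acts on each summand as the monic linear polynomial $x_{v_1}-x_{v_2}\in B[x_{v_1}]$, which is a nonzerodivisor, so $u$ is regular and $(A\otimes_K B)/(u)\cong\Dirsum_i B\bar e_i$ is a \emph{free} $B$-module. On this free module $w=y_{v_1}-y_{v_2}$ acts as $\bar C-y_{v_2}\,\mathrm{id}$, where $\bar C$ is the (column-finite) matrix of multiplication by $y_{v_1}$, with entries in $K[x_{v_2}]\subset B$. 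To see that $\bar C-y_{v_2}\,\mathrm{id}$ is injective I pass to the total ring of fractions of the reduced ring $B$, a product of the fields $\mathrm{Frac}(B/\mathfrak{q})$; over each such field $\bar y_v$ is transcendental over $K(\bar x_v)$, so every finite characteristic determinant $\det(\bar C_{T}-y_{v_2}I)$ is a nonzero polynomial in $\bar y_v$, forcing any finitely supported kernel vector to vanish. Hence $w$ is a nonzerodivisor modulo $u$, and $u,w$ is a regular sequence.

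The main obstacle is precisely this second step. One should note that $x_v,y_v$ is in general \emph{not} a regular sequence on $A=S_1/J_{G_1}$ itself: already for a single edge $\{v,a\}$ one has $A/(x_v)\cong K[x_a,y_v,y_a]/(x_ay_v)$, on which $y_v$ is a zerodivisor. The regularity of $u,w$ is therefore a genuinely mixed phenomenon, and its proof rests on the freeness of $(A\otimes_K B)/(u)$ over $B$ together with the transcendence of the fresh variable $y_v$ modulo the minimal primes of $B$. Establishing that transcendence, equivalently that $\bar x_v,\bar y_v$ remain algebraically independent in each $S/P_{\mathcal{S}}$, is where the determinantal structure of the complete-graph components is used, and it is the delicate combinatorial input supplied by the free-vertex hypothesis.
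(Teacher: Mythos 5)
Your proposal is correct, and its overall architecture coincides with the paper's: both separate the gluing vertex into two symbols $v_1,v_2$, realize $S/J_G$ as the quotient of the ring of the disjoint union $A\otimes_K B$ by the two linear forms $x_{v_1}-x_{v_2},\,y_{v_1}-y_{v_2}$, and then chain Proposition~\ref{modreg} with Proposition~\ref{tensor} (or Corollary~\ref{connected}). The genuine difference is localized in the one nontrivial input: the paper simply quotes the proof of \cite[Theorem 2.7]{RR} for the fact that these two forms are a regular sequence, whereas you prove it from scratch. Your argument for that step is sound: a free vertex lies in no cut set, so $\bar x_v,\bar y_v$ are algebraically independent modulo every minimal prime $P_{\mathcal S}$ (they are the two coordinates of a column in a $2$-minor determinantal factor, i.e.\ $su_v,tu_v$ in the Segre presentation); reducedness of $J_{G_1}$ then gives torsion-freeness, hence graded freeness, of $A$ over $K[x_{v_1}]$ (using that $A$ is nonnegatively graded with finite-dimensional components), which makes $u$ regular with free quotient over $B$; and for $w$ the restriction of the eigenvalue equation to the finite support $T$ of a putative kernel vector yields $(\bar C_{T\times T}-\bar y_v I)\xi_T=0$, whose determinant is a monic polynomial of degree $|T|$ in $\bar y_v$ over $K[\bar x_v]$ and hence nonzero in each $\mathrm{Frac}(B/\mathfrak q)$ by transcendence. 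What your route buys is self-containedness and an explanation of exactly where freeness of $v$ enters (your observation that $x_v,y_v$ is \emph{not} a regular sequence on $A$ alone is a useful warning that the naive argument fails); what the paper's route buys is brevity, at the cost of sending the reader to \cite{RR}. If you write this up, state explicitly the structure result for graded torsion-free modules over $K[x_{v_1}]$ that you invoke, and note that only the $T\times T$ block of the column-finite matrix is needed because $\bar C\xi=\bar y_v\xi$ is supported in $T$.
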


\begin{proof}
Let $V(G)=[n]$ and assume that $G_1$ and $G_2$ are glued along the vertex $v\in [n].$ Let $v^\prime$ be a vertex which does
not belong to $V(G)$ and let $G_2^\prime$ be the graph with $V(G_2^\prime)=(V(G_2)\setminus \{v\})\cup\{v^\prime\}$ whose
edge set is $E(G_2^\prime)=E(G_2\setminus\{v\})\cup\{\{i,v^\prime\}:\{i,v\}\in E(G_2)\}$. We set $S=K[x_1,\ldots,x_n,y_1,\ldots,y_n]$ and $S^\prime=S[x_{v^\prime},y_{v^\prime}].$ Let $\ell_{x}=x_{v}-x_{v^\prime}$ and $\ell_{y}=y_v-y_{v^\prime}.$
By the proof of \cite[Theorem 2.7]{RR}, we know that $\ell_x,\ell_y$ is a regular sequence on $S^\prime/J_{G^\prime}$, where
$G^\prime$ is the graph whose connected components are $G_1$ and $G_2^\prime.$ Moreover, we obviously have
\[
S^\prime/(J_G^\prime,\ell_x,\ell_y)\cong S/J_G.
\]
By Proposition~\ref{modreg}, it follows that $G$ is Koszul if and only if $G^\prime$ is Koszul. Next,
by Corollary~\ref{connected}, it follows that $G^\prime$ is Koszul if and only its connected components, namely  $G_1$ and
$G_2^\prime$, are Koszul. Finally, we observe that $G^\prime_2$ is Koszul if and only if $G_2$ is so.
\end{proof}

Let $G$ be a graph.  By Dirac's theorem \cite{D}, $G$ is chordal if and only if the facets of $\Delta(
G)$ can be ordered as $F_1,\ldots,F_r$ such that, for all $i>1,$ $F_i$ is a leaf of the simplicial complex
$\langle F_1,\ldots,F_{i-1}\rangle$. This means that there exists a facet $F_j$ with $j<i$ which intersects $F_i$ maximally,
that is, for each $\ell<i,$ $F_\ell\cap F_i\subset F_j\cap F_i$. $F_j$ is called a {\em branch} of $F_i.$

The following corollary gives a class of chordal and claw-free graphs which are Koszul.

\begin{Corollary}\label{corchordalclawfree}
Let $G$ be a chordal and claw-free graph with the property that  $\Delta(G)$ admits a leaf order  $F_1,\ldots,F_r$ such that  for all $i>1,$ the facet $F_i$ intersects any of  its branches in one vertex. Then $G$ is Koszul.
\end{Corollary}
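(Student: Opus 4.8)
The plan is to argue by induction on the number $r$ of facets of $\Delta(G)$, peeling off the last facet $F_r$ of the leaf order and invoking the gluing theorem, Theorem~\ref{gluing}. By Corollary~\ref{connected} we may assume $G$ is connected, and since a leaf order assembles $G$ connectedly the graph $G_1=\langle F_1,\ldots,F_{r-1}\rangle$ left after deleting the private vertices of $F_r$ is again connected. The base case $r=1$ is immediate: then $G$ is a single clique, i.e.\ a complete graph, and a complete graph is closed with respect to any labeling and hence Koszul, as recalled in the introduction.

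For the inductive step, fix a branch $F_j$ (with $j<r$) of $F_r$. The leaf property gives $F_\ell\cap F_r\subseteq F_j\cap F_r$ for all $\ell<r$, and by hypothesis $F_j\cap F_r=\{v\}$ is a single vertex; hence every vertex of $F_r$ other than $v$ is private to $F_r$, and $V(G_1)=\bigcup_{\ell<r}F_\ell$ meets $V(F_r)$ precisely in $\{v\}$. Thus $G$ is the gluing of $G_1$ and the complete graph on $F_r$ along $v$. The facets of the induced subgraph $G_1$ are exactly $F_1,\ldots,F_{r-1}$, this truncated order is again a leaf order with the one-vertex intersection property, and $G_1$ inherits chordality and claw freeness; so the induction hypothesis will apply to $G_1$ once the gluing is shown to be legitimate.

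The crux, and the only place where claw freeness is used, is to verify that $v$ is a \emph{free} vertex of $G_1$, as Theorem~\ref{gluing} demands (freeness of $v$ in the complete graph $F_r$ being automatic). Suppose instead that $v$ lies in two distinct facets $F_a,F_b$ of $G_1$. Pick a private vertex $z\in F_r\setminus\{v\}$, so that $\{v,z\}\in E(G)$ while $z$ is adjacent to no vertex outside $F_r$, and choose $x\in F_a\setminus\{v\}$, $y\in F_b\setminus\{v\}$ with $x\neq y$; note $x,y\notin F_r$, so $z$ is adjacent to neither. Since $\{v,x\},\{v,y\},\{v,z\}\in E(G)$ and $z$ is joined to neither $x$ nor $y$, avoiding the induced claw on $\{v,x,y,z\}$ forces $\{x,y\}\in E(G)$. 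Letting $x,y$ vary shows that $F_a\cup F_b$ is a clique, contradicting the maximality of the facets $F_a,F_b$. Therefore $v$ is free in $G_1$, Theorem~\ref{gluing} applies, and $G$ is Koszul if and only if $G_1$ and the complete graph $F_r$ are---both of which hold, the former by induction and the latter because complete graphs are closed. The delicate point is exactly this freeness claim, where chordality (to peel along a branch), the one-vertex intersection condition (to glue along a single vertex), and claw freeness (to produce the contradiction) must all be combined.
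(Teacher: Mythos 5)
Your proof is correct and follows the paper's strategy exactly: induct on the leaf order, detach the leaf $F_r$ (which meets the rest of the graph only in the single vertex $v$, by the leaf property combined with the one-vertex-intersection hypothesis), and invoke Theorem~\ref{gluing} after verifying via claw-freeness that $v$ is free in $\langle F_1,\ldots,F_{r-1}\rangle$. The only (harmless) deviation is in how that verification concludes: the paper exhibits an explicit induced claw on $v$ together with one suitable vertex from each of $F_r$, the branch $F_{r-1}$, and a further facet through $v$, re-using the one-vertex-intersection hypothesis to exclude one of the three potential edges, whereas you let claw-freeness force all edges between two facets of $G_1$ through $v$ and contradict facet maximality.
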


\begin{proof}
We proceed by induction on $r.$ If $r=1$, there is nothing to prove since any clique is Koszul. Let $r>1$ ans assume
that the graph $G^\prime$ with $\Delta(G^\prime)=\langle F_1,\ldots, F_{r-1}\rangle$ is Koszul. We may assume that $F_{r-1}$
is a branch of $F_r$ and let $\{v\}=F_r\cap F_{r-1}.$  The desired statement follows by applying Theorem~\ref{gluing} for $G^
\prime$ and the clique $F_r$, once we show that $v$ is a free vertex of $G^\prime.$

Let us assume that $v$ is not free in $G^
\prime$ and choose a maximal clique $F_j$ with $j\leq r-2$ such that $v\in F_j$.  We may find three vertices $a,b,c\in V(G)$
such that $a\in F_r\setminus (F_{r-1}\cup F_j),$ $b\in F_{r-1}\setminus(F_r\cup F_j)$, and $c\in F_j\setminus(F_r\cup F_{r-1}
).$ If $\{a,b\}\in E(G)$, then there exists a maximal clique $F_k$ with $k\leq r-1$ such that $a,b\in F_k.$ This implies that
$a\in F_k\cap F_r\subset \{v\},$ contradiction. Therefore, $\{a,b\}$ is not an edge of $G$. Similarly, one proves that
$\{a,c\}\notin E(G)$. Let us now assume that $\{b,c\}\in E(G).$ The clique on the vertices $v,b,c$ is contained in some
maximal clique $F_k$. We have $k\leq r-2$ since $F_k\neq F_{r-1}.$ Then it follows that $|F_k\cap F_{r-1}|\geq 2$ which is a contradiction to our hypothesis on $G.$ Consequently, we have proved that $\{a,b\}, \{b,c\}, \{a,c\}\notin E(G).$ Hence, $G$ contains a claw as an induced subgraph, contradiction. Therefore, $v$ is a free vertex of $G^\prime.$
\end{proof}

In Figure~\ref{example} is shown   a graph which satisfies the conditions of Corollary~\ref{corchordalclawfree} and  is not closed.

\begin{figure}[hbt]
\begin{center}
\psset{unit=0.8cm}
\begin{pspicture}(1,1)(5,5)
\pspolygon(2,2)(3,3.71)(4,2)
\psline(3,3.71)(3,5.2)
\psline(0.6,1.1)(2,2)
\psline(4,2)(5.6,1.1)
\rput(2,2){$\bullet$}
\rput(3,3.71){$\bullet$}
\rput(4,2){$\bullet$}
\rput(3,5.2){$\bullet$}
\rput(0.6,1.1){$\bullet$}
\rput(5.6,1.1){$\bullet$}
\end{pspicture}
\end{center}
\caption{}
\label{example}
\end{figure}

Figure~\ref{notkoszul} displays  a chordal and claw free graph $G$ which is not Koszul. That  $G$  is not Koszul can be seen as follows: we first observe that the graph $G'$ restricted to the vertex set $[4]$ is Koszul by Corollary~\ref{corchordalclawfree}, and that $B=K[x_1,\ldots,x_4,y_1,\ldots,y_4]/J_{G'}$ is an algebra retract of $A=K[x_1,\ldots,x_6,y_1,\ldots,y_6]/J_G$ with retraction map $A\to A/(x_5,x_6,y_5,y_6)\iso B$. Thus if $A$ would be Koszul, the ideal $(x_5,x_6,y_5,y_6)$ would have to have an $A$-linear resolution, see \cite[Proposition 1.4]{OHH}. It can be verified with Singular \cite{DGPS} that this is not the case.

\begin{figure}[hbt]
\begin{center}
\psset{unit=1.5cm}
\begin{pspicture}(0,0.5)(3,2.5)
\pspolygon(0.5,0.5)(1.5,0.5)(1,1.4)
\pspolygon(1.5,0.5)(2,1.4)(2.5,0.5)
\pspolygon(1,1.4)(1.5,2.3)(2,1.4)

\rput(0.5,0.5){$\bullet$}
\rput(1.5,0.5){$\bullet$}
\rput(1,1.4){$\bullet$}
\rput(2,1.4){$\bullet$}
\rput(2.5,0.5){$\bullet$}
\rput(1.5,2.3){$\bullet$}

\rput(0.3,0.3){$3$}
 \rput(1.5,0.3){$5$}
 \rput(0.8,1.4){$2$}
 \rput(2.2,1.4){$4$}
 \rput(2.7,0.3){$6$}
 \rput(1.5,2.5){$1$}

\end{pspicture}
\end{center}
\caption{}\label{notkoszul}
\end{figure}

\medskip
A {\em line graph} of length $m$ is a graph which is isomorphic  to the  graph with edges $\{1,2\},\{2,3\},\ldots,\{m-1,m\}$.  A {\em $2$-dimensional line graph} is a graph whose cliques are $2$-dimensional cliques composed as shown in  Figure~\ref{twodimensional}.

\begin{figure}[hbt]
\begin{center}
\psset{unit=1.5cm}
\begin{pspicture}(1,0.5)(4,2.5)
\pspolygon(0.5,0.5)(1.5,0.5)(1,1.4)
\pspolygon(1.5,0.5)(2,1.4)(2.5,0.5)
\psline(1,1.4)(2,1.4)
\pspolygon(2.5,0.5)(3,1.4)(3.5,0.5)
\psline(2,1.4)(3,1.4)
\pspolygon(3.5,0.5)(4,1.4)(4.5,0.5)
\psline(3,1.4)(4,1.4)

\rput(0.5,0.5){$\bullet$}
\rput(1.5,0.5){$\bullet$}
\rput(1,1.4){$\bullet$}
\rput(2,1.4){$\bullet$}
\rput(2.5,0.5){$\bullet$}
\rput(3,1.4){$\bullet$}
\rput(4,1.4){$\bullet$}
\rput(3.5,0.5){$\bullet$}
\rput(4.5,0.5){$\bullet$}

\end{pspicture}
\end{center}
\caption{}\label{twodimensional}
\end{figure}

To be precise,  a $2$-dimensional line graph of length $m$ is a graph whose clique complex is isomorphic to the simplicial complex with  facets
\[
\{1,2,3\},\{2,3,4\}, \ldots,\{m-1,m,m+1\}, \{m,m+1,m+2\}.
\]

From what we have shown so far it is not too difficult to obtain the following classification result, which roughly says that any connected Koszul graph  whose clique  complex is of dimension $\leq 2$ is obtained by gluing $1$-dimensional  and $2$-dimensional line graphs.

\begin{Theorem}
\label{classification}
Let $G$ be a connected graph whose clique  complex is of dimension $\leq 2$. The following conditions are equivalent:
\begin{enumerate}
\item[(a)] $G$ is Koszul;
\item[(b)] There exists a tree $T$ whose vertices have order at most $3$ such that $G$ is obtained from $T$ as follows:
\begin{enumerate}
\item[(i)] each vertex $v$ of $T$ is replaced by a $1$-dimensional or $2$-dimensional line graph $G_v$;
\item[(ii)] if $\{v,w\}$ is an edge of $T$ then $G_v$ and $G_w$ are glued via a free vertex of $G_v$ and $G_w$;
\item[(iii)] if $v$ is a vertex of order $3$ and $w_1,w_2,w_3$ are the  neighbors of $v$, then $G_v$ is a simplex and each $G_{w_i}$ is glued to a different vertex of $G_v$.
\end{enumerate}
\end{enumerate}
\end{Theorem}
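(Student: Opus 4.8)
The plan is to prove the two implications separately, using the gluing machinery of this section for $(\mathrm b)\Rightarrow(\mathrm a)$ and the obstruction results of Section~\ref{one} together with a combinatorial decomposition for $(\mathrm a)\Rightarrow(\mathrm b)$.

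For $(\mathrm b)\Rightarrow(\mathrm a)$ I would first record that every building block is Koszul. A $1$-dimensional line graph is a path, a simplex is complete, and a $2$-dimensional line graph is closed with respect to the natural labelling $1,2,3,\dots$ (its edge set consists of all pairs of vertices at distance at most two, and one checks the closedness condition directly on such an edge set); in all three cases the graph is closed, hence Koszul. I would then reconstruct $G$ from the tree $T$ by attaching the pieces $G_v$ one edge of $T$ at a time, always peeling off a leaf of $T$. By (ii) and (iii) each such step glues a new piece to the part already assembled along a single vertex that is free in both, so Theorem~\ref{gluing} applies and Koszulness is preserved; induction on the number of vertices of $T$ then gives that $G$ is Koszul. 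The only point needing care is that at a vertex $v$ of order $3$ the three gluings at the distinct vertices of the simplex $G_v$ do not interfere: after a piece has been glued at one vertex of the triangle the remaining two vertices stay free, so Theorem~\ref{gluing} may be reapplied.

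For $(\mathrm a)\Rightarrow(\mathrm b)$ I would begin from Theorem~\ref{chordalclawfree}, giving that $G$ is chordal and claw free, and add one further necessary condition: the configuration of Figure~\ref{notkoszul} (a triangle with a further triangle glued along each of its three edges) is not Koszul, as shown there, and induced subgraphs of Koszul graphs are Koszul (see the proof of Theorem~\ref{chordalclawfree}), so this configuration cannot occur as an induced subgraph of $G$. I would then prove the structural statement by induction on the number $r$ of facets of $\Delta(G)$, using a leaf order $F_1,\dots,F_r$ supplied by Dirac's theorem with branch $F_j$ for $F_r$. Since every vertex in $F_r\setminus F_j$ belongs only to $F_r$, the induced subgraph $G'$ of $G$ on $V(F_1)\cup\cdots\cup V(F_{r-1})$ satisfies $\Delta(G')=\langle F_1,\dots,F_{r-1}\rangle$; it is again Koszul, so by induction it carries a tree $T'$ as in (b). It remains to analyse how the leaf $F_r$, an edge or a triangle, attaches to $G'$ along $F_j$.

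This local case analysis is the heart of the matter and the step I expect to be the main obstacle. Claw freeness is what forces every attachment to occur at a \emph{free} vertex: gluing a piece at an internal vertex of a path or at a non-tip vertex of a triangle strip would give that vertex three pairwise non-adjacent neighbours and hence an induced claw. The excluded configuration of Figure~\ref{notkoszul}, together with chordality, is what rules out cyclic gluing patterns: a cycle of triangles meeting pairwise in single vertices is either a chordless cycle of length $\geq 4$ (four or more triangles) or is exactly the graph of Figure~\ref{notkoszul} (three triangles), while a cycle using shared edges forces a $K_4$, contradicting $\dim\Delta(G)\leq 2$. Finally, the number of free vertices of each piece --- three for a single triangle, two for every other line graph --- automatically bounds the order of the vertices of $T$ by $3$ and shows that an order-$3$ branching can only occur at a simplex, which is precisely (i)--(iii). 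Assembling these observations, $F_r$ either extends the line graph containing its branch or is glued at a free vertex of $G'$ as a new piece; in the latter case one enlarges $T'$ to $T$ by one vertex and one edge, and the tree structure propagates from $G'$ to $G$.
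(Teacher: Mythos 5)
Your proposal is correct in outline and uses the same ingredients as the paper, but the induction in (a)$\Rightarrow$(b) is organized differently and its decisive case analysis is only sketched. The direction (b)$\Rightarrow$(a) coincides with the paper's: both peel leaves off $T$ and apply Theorem~\ref{gluing} after noting that each block is closed, hence Koszul; your remark that the remaining vertices of a simplex stay free after one gluing is a point the paper leaves implicit. For (a)$\Rightarrow$(b), you run a single induction on all facets of $\Delta(G)$ via a global leaf order, whereas the paper argues in two stages: it first proves, by induction on the number of facets of each maximal subcomplex $D$ of $2$-dimensional facets that is connected in codimension $1$, that such a $D$ is a $2$-dimensional line graph, and only then assembles the intersection tree $T$ from these pieces $D_1,\dots,D_r$ together with the maximal $1$-dimensional connected pieces $L_1,\dots,L_s$, checking separately that pairwise intersections are single free vertices, that no vertex lies in three pieces, and that $T$ is acyclic. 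The obstructions you invoke are exactly the paper's: chordality and claw-freeness from Theorem~\ref{chordalclawfree}, heredity of Koszulness under induced subgraphs, and the non-Koszulness of the graph in Figure~\ref{notkoszul}. The ``local case analysis'' you flag as the main obstacle is precisely what the paper carries out: a leaf triangle attached to a strip along an edge $\{i,i+1\}$, or along $\{1,3\}$ at the end, produces an induced claw; attached along a diagonal $\{i,i+2\}$ with $1<i<m$ it produces the Figure~\ref{notkoszul} configuration; cycles in $T$ are excluded by chordality alone. One small imprecision in your sketch: Figure~\ref{notkoszul} is not needed to rule out ``cyclic gluing patterns'' (chordality does that), but rather to rule out \emph{branching inside a single $2$-dimensional strip}, i.e., a third triangle glued along an interior diagonal edge; conversely, three triangles meeting pairwise in single vertices automatically force those three vertices to span a triangle, so that configuration is again Figure~\ref{notkoszul} rather than a chordless cycle. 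With the case analysis written out, your argument closes and is essentially a reorganization of the paper's proof rather than a genuinely different one.
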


\begin{proof} (a)\implies (b): Let $D$ be a subcomplex  consisting of $2$-dimensional facets of $\Delta(G)$, and assume that $D$  is  connected in codimension $1$. By that we mean that, for any two facets $F, F'\in D$, there exist facets $F_1,\ldots,F_r$ such that $F=F_1$ and $F'=F_r$, and such that $F_i$ and $F_{i+1}$ intersect along an edge for $i=1,\ldots,r-1$. We claim that the $1$-skeleton $H$ of $D$ is a $2$-dimensional line graph, and prove this by induction on the number of facets  of $D$. The assertion is trivial if the number of facets of $D$ is $\leq 3$, because since $G$ is claw free,  $D$ cannot be isomorphic to the graph with edges $\{1,2,3\},\{1,2,4\}$ and $\{1,2,5\}$. Now assume that $D$ has $m+1>3$ facets. By Theorem~\ref{chordalclawfree}, the graph $G$ is chordal, and hence $H$ is chordal as well. Applying Dirac's theorem \cite{D} we conclude that $D$ admits a leaf $F$. Let $D'$ be the subcomplex of $D$ which is obtained from $D$ by removing the leaf $F$. Then $D'$ is again connected in codimension $1$. Our induction hypothesis implies that the $1$-skeleton $H'$ of $D'$ is a $2$-dimensional line graph. For simplicity we may assume that the facets of $D'$ are $\{1,2,3\},\{2,3,4\}, \ldots,\{m-1,m,m+1\}, \{m,m+1,m+2\}$. If $F=\{a,1,2\}$ or $F=\{m+1,m+2, b\}$, then $H$ is a $2$-dimensional line graph, and we are done. Otherwise, $F=\{i,i+1,c\}$ or $F=\{i,i+2,c\}$ for some $i\in [m]$ and some vertex $c$ of $D$.
The first case cannot happen, since $G$ is claw free. In the second case,
if $1<i<m$, then $D$, and consequently, $\Delta(G)$ contains an induced subgraph isomorphic to the graph in Figure~\ref{notkoszul}. Thus $G$ is not Koszul, a contradiction. On the other hand, if $i=1$, then $H$ is not claw free, because then edges  $\{3,c\},\{3,2\},\{3,5\}$ form a claw which is an induced subgraph of $G$, contradiction to the fact that $G$ must be claw free.

Let $D_1,\ldots, D_r$ be  the maximal $2$-dimensional subcomplexes of $\Delta(G)$ which are connected in codimension $1$, and $L_1,\ldots, L_s$ be the maximal $1$-dimensional connected subcomplexes of $\Delta(G)$. Each $L_i$ is a $1$-dimensional line graph, otherwise $G$ would not be claw free, and the $D_i$ are all $2$-dimensional line graphs, as we have seen above. The maximality of the $L_i$  implies that $V(L_i)\sect V(L_j)=\emptyset$ for $i\neq j$, and the
maximality of  $D_i$ implies that each facet of $D_i$ intersects any facet of $\Delta(G)$ not belonging to $D_i$ in at most one vertex.

Now we let $T$ be the graph whose vertices are $D_1,\ldots,D_r, L_1,\ldots,L_s$.  The edge set $E(T)$ consists of the edges  $\{D_i, D_j\}$  if $V(D_i)\sect V(D_j)\neq \emptyset$ and $\{D_i,L_j\}$ if $V(D_i)\sect V(L_j)\neq  \emptyset$.

If $\{D_i, D_j\}\in E(T)$ and $v$ is a common vertex of $D_i$ and $D_j$, then $v$ must be a free vertex of $D_i$ and of $D_j$, because otherwise $G$ would not be claw free. Moreover, $|V(D_i)\sect V(D_j)|\leq 1$,  because otherwise $G$ contains a cycle of length $>3$ without chord, contradicting the fact that $G$ is chordal. By the same reason we have that $|V(D_i)\sect V(L_j)|\leq 1$ for all $i$ and $j$.

Next observe that the intersection of any three of the sets $$V(D_1),\ldots, V(D_r), V(L_1),\ldots, V(L_s)$$ is the empty set, which follows from the fact that $G$ is claw free. Thus the order of the vertices of $T$ can be at most the number of free vertices of an $D_i$ or $L_j$, and hence is at most $3$, where the maximal number $3$ can be reached only if one of the $D_i$ is a $2$-simplex. Finally $T$ must be a tree, because otherwise $G$ would not be chordal.

(b)\implies (a): We proceed by induction on $V(T)$. If $V(T)=1$, then $G$ is a $1$-dimensional or $2$-dimensional line graph. In both cases $G$ is closed and hence has a quadratic Gr\"obner basis. This implies that $G$ is Koszul. Now let $V(T)>1$, and choose a free vertex $v\in V(T)$. Then $T'=T\setminus v$  is again a tree satisfying  conditions~(b). Let $W=\Union_{w\in T'}V(G_w)$ and $G'$ the restriction of $G$ to $W$. Then our induction hypothesis implies that $G'$ is Koszul. Since $G_v$ is Koszul and since $G$ is obtained from $G'$ and $G_v$ by gluing along a common free vertex, the desired conclusion follows from Theorem~\ref{gluing}.
\end{proof}

\begin{Remark}
\label{infinite}{\em
Let $G$ be a connected Koszul graph whose clique  complex is of dimension $\leq 2$, and let $T$ be its ``intersection tree" as described in Theorem~\ref{classification}(b). If $\Delta(G)$ does not contain a subcomplex as given in Figure~\ref{example}, then $T$ is a line graph. In this case $G$ is obtained by gluing in alternative order $1$-dimensional and $2$-dimensional line graphs. Thus it follows from \cite[Theorem 2.2]{EHH} that $G$ is closed and hence $J_G$  has a quadratic Gr\"obner basis.

On the other hand, if $T$ contains a subcomplex as shown in Figure~\ref{example}, then $G$ is not closed, and hence by a result of Crupi and Rinaldo \cite[Theorem 3.4]{CR} it follows that $J_G$ has no quadratic Gr\"obner basis for any monomial order.  Thus those graphs provide and infinite family of binomial ideals which define a Koszul algebra but do not have a quadratic Gr\"obner basis.

Examples of toric rings which are Koszul but do not have a  quadratic Gr\"obner basis were found independently by Roos and Sturmfels \cite{RS} and by Ohsugi and Hibi \cite{OH}.
}
\end{Remark}

\end{document}